\theoremstyle{definition}
\newtheorem{definition}{Definition}[section]
\theoremstyle{plain}
\newtheorem{prop}{Property}[section]
\newtheorem{thm}{Theorem}
\newtheorem{conj}{Conjecture}
\newtheorem{claim}{Claim}
\newtheorem{question}{Question}
\newcommand{\stild}{\widetilde{\sigma}}
\begin{document}

\begin{frontmatter}

\title{Antimagicness of graphs with a dominating clique}
\author{Grégoire Beaudoire, Cédric Bentz, Christophe Picouleau}

\affiliation{organization={CEDRIC, Conservatoire National des Arts et Métiers},city={Paris}, country={France}}

\begin{abstract}
A graph $G = (V,E)$ is called \emph{antimagic} if there exists a bijective labelling $f : E \rightarrow \{1,2,\ldots,|E|\}$ such that the vertex-sums of labels over edges incident to a given vertex are all distinct. In this paper, we extend the antimagicness results over graphs with a dominating clique. We also introduce an alternative to the usual definition of antimagic graphs, called \emph{$C$-antimagic}, allowing for the labelling to be injective in $\{1,2,\ldots,|E|+C\}$ instead of bijective, and show that almost all graphs with a dominating clique are $3$-antimagic.
\end{abstract}

\begin{keyword}

Antimagic labelling  \sep Antimagic injections \sep Dominating cliques

\end{keyword}

\end{frontmatter}

\section{Introduction and definitions}

In this paper, we only consider finite, simple, and undirected graphs. We refer to \cite{west} for undefined terminology.

Let $G = (V,E)$ be a graph with $|V| = n$ and $|E| = m$. For each vertex $v \in V$, we will denote by $N_G(v)$ the set of neighbors of $v$ in $G$. When $G$ is obvious from the context, we will simply write $N(v)$. We will write $d_G(v) = |N_G(v)|$, and simply $d(v)$ when $G$ is clear from the context. If $d(v) = n - 1$, $v$ is called a \emph{universal vertex}. If all the vertices of $G$ are universal, then $G$ is a \emph{complete graph}. For any two disjoint subsets $A,B \subset V$, we will denote by $E(A,B) \subset E$ the set of edges with an endpoint in $A$ and the other in $B$.

For any undirected path $v_1,v_2,\ldots,v_k$ in $G$ we will call $v_2,\ldots,v_{k-1}$ \emph{interior vertices} and $v_1$ and $v_k$ the \emph{endpoints} of the path.

The subgraph of $G = (V,E)$ \emph{induced} by a set $S \subseteq V$ is denoted by $G[S]$, and defined as $G[S] = (S,F)$ where $F = \{xy \in E | x,y \in S\}$. If $G$ has an induced complete subgraph $H$, $H$ is called a \emph{clique} in $G$. Additionally, if $H$ is such that every vertex $v \in V(G) \setminus V(H)$ has a neighbor in $H$, $H$ is called a \emph{dominating clique} in $G$. For the sake of simplicity, in the rest of the article, we will use $H$ to denote both the vertex set of a dominating clique and the dominating clique itself. If $G$ has an induced subgraph $H$ such that every vertex in $H$ has degree $0$, $H$ is called an \emph{independent set} in $G$.

Given a graph $G = (V,E)$, let $f : E \rightarrow \{1,2,\ldots,m\}$ be a bijective labelling of the edges of $G$. For each vertex $u \in V$, we will denote by $\sigma(u) = \sum\limits_{v \in V | uv \in E} f(uv)$ the sum of the labels over the edges incident to $u$. If all the values of $\sigma(u)$ are pairwise distinct for all $u \in V$, then $f$ is called an \emph{antimagic labelling} of $G$, and $G$ is said to be \emph{antimagic}.

Antimagic labelling was originally introduced by Hartsfield and Ringel in 1990 \cite{hartsfield1990}, where they introduced the following (still open) conjecture:

\begin{conj}
    \label{conjcentrale}
    Every connected graph other than $K_2$ is antimagic.
\end{conj}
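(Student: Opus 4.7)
The statement is the Hartsfield--Ringel conjecture, open since 1990, so any plan will necessarily be speculative and the abstract already signals that the paper only resolves special cases. Still, the natural high-level strategy is induction on $m=|E(G)|$: remove a carefully chosen edge $e=uv$, invoke the inductive hypothesis to antimagically label $G-e$ with labels in $\{1,\dots,m-1\}$, and then reinsert $e$ with label $m$ (shifting earlier labels by zero, since $m$ is new). The only things $m$ affects are $\sigma(u)$ and $\sigma(v)$, which increase by $m$.

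First I would handle base cases by inspection, and in the inductive step try to select $e$ so that (i) $G-e$ is still connected and not $K_2$, (ii) neither endpoint becomes isolated, and (iii) $u$ and $v$ have degrees that ensure, after adding $m$ to $\sigma(u)$ and $\sigma(v)$, no collisions arise. Condition (iii) fails exactly when the pre-labelling has sums $\sigma'(u)$ and $\sigma'(v)$ equal, modulo the shift, to $\sigma'(w)$ for some other vertex $w$. When no such edge $e$ exists (typical obstructions: pendant-heavy graphs, graphs whose only removable edges are bridges, or degree-regular cores where collisions cannot be avoided after reinsertion), I would fall back on a direct construction.

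The direct construction would group vertices into degree classes $V_i=\{v:d(v)=i\}$, order them, and attempt to assign labels so that within each class the resulting vertex-sums are forced to lie in disjoint intervals determined by the class. The guiding heuristic, used in most partial results, is that giving large labels to edges incident to high-degree vertices naturally separates the sums across classes; the delicate part is engineering distinctness \emph{within} a class. A swapping/exchange argument, in which two labels on suitably chosen edges are interchanged to resolve each pairwise collision while preserving all other sums, is the standard tool.

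The main obstacle, and precisely the reason the conjecture has resisted proof for 35 years, is exactly this intra-class separation: one must realize, simultaneously for every degree class, a prescribed set of distinct integer sums from a constrained pool of labels, and no global mechanism is known. Every known partial result (trees, regular graphs, graphs of minimum degree $\Omega(\log n)$, and, as in the present paper, graphs with a dominating clique) substitutes a strong structural hypothesis that makes either the inductive edge removal or the intra-class swap feasible. I would therefore expect the above plan to succeed only under such additional hypotheses, and the conjecture in full generality to remain out of reach.
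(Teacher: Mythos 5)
You were right to flag at the outset that this statement is not something the paper proves: it is Conjecture \ref{conjcentrale}, the Hartsfield--Ringel conjecture, which the paper explicitly presents as still open and then attacks only in special cases (Theorems \ref{dominanted} and \ref{injectioned}, plus the relaxed Conjecture \ref{conjpascentrale}). So there is no ``paper proof'' to match, and your proposal, by your own admission, is a plan rather than a proof. Since the task is to assess it as a proof attempt, the verdict is that it has a genuine and unavoidable gap, and it is worth naming exactly where your two strategies break down rather than leaving it at ``the conjecture is hard.''

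The inductive scheme fails at condition (iii), and not for a repairable reason: the inductive hypothesis only asserts that \emph{some} antimagic labelling of $G-e$ exists, with no control whatsoever over the resulting multiset of vertex-sums. You must commit to the edge $e$ \emph{before} invoking the hypothesis, and afterwards you have no freedom left --- if $\sigma'(u)+m$ or $\sigma'(v)+m$ happens to equal $\sigma'(w)$ for some high-degree $w$, you cannot relabel or re-choose $e$ without restarting, and nothing bounds how often this loops. Worse, the induction cannot even start on trees, where every edge is a bridge, so condition (i) is never satisfiable; trees are among the stubborn open cases of the conjecture, so this is not a corner case one can wave away. Your fallback ``direct construction'' then restates the open problem rather than solving it: separating sums \emph{across} degree classes by giving large labels to high-degree vertices is indeed the engine behind the partial results (it is essentially what the paper's Step-ordering of $E_2$, $E_3$, $E_4$ and its post-processing label swaps accomplish under the dominating-clique hypothesis), but the intra-class distinctness you correctly identify as delicate is precisely the unsolved core, and your sketch offers no mechanism for it. One factual note: regular graphs are not an obstruction class --- they are known to be antimagic --- so the genuinely hard residue lies elsewhere. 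In short, your self-assessment is accurate: absent a structural hypothesis like the paper's dominating clique (which supplies both the forced large sums on $K$ and the adjacent swap-pairs $u_iu_k$, $u_{i+1}u_{i+2}$ used to resolve conflicts), neither of your strategies closes, and no complete proof is obtained.
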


The topic is the focus of a chapter of 12 pages in the dynamic survey on graph labelling, updated yearly by J. Gallian \cite{survey}.

Our paper mainly focuses on graphs with a dominating clique. We survey the existing results over graphs with such a structure; the first result was proved by Barrus in 2010 \cite{barrus2010}:

\begin{thm}\label{barrused}
    Let $G = (V,E)$ be a graph with at least $3$ vertices. If $G$ has a clique $B$ such that, for every vertex $v \in V$, either $N(v) \subset B$ or $B \subset N(v)$, then $G$ is antimagic.
\end{thm}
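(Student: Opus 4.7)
The plan is to construct an antimagic labelling in three ``bands'' of consecutive labels, corresponding to three natural edge classes. Let $k = |B|$. The hypothesis forces $V \setminus B$ to split as $W = \{v \notin B : N(v) \subseteq B\}$ and $U = \{v \notin B : B \subseteq N(v)\}$ (these may overlap, precisely at those $v$ with $N(v) = B$), and the edges of $G$ partition as $E_B$ (the intra-$B$ edges, forming a $K_k$), $E_o := E(B, V \setminus B)$, and $E_U$ (edges inside $U$); crucially, no edge connects $W \setminus U$ to $V \setminus B$. I would allocate the smallest $|E_U|$ labels to $E_U$, the middle $|E_o|$ labels to $E_o$, and the top $\binom{k}{2}$ labels to $E_B$.

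Inside each band I refine the assignment as follows. For the top band, since $K_k$ is antimagic for $k \geq 3$ (the cases $k \leq 2$ being trivial or handled directly), I would choose the intra-$B$ labelling so that the intra-$B$ contributions yield a strict ordering $s_1 < \cdots < s_k$ on some enumeration $b_1, \ldots, b_k$ of $B$. For the bottom band, I would assign labels so that the intra-$U$ contributions to $\sigma$ are pairwise distinct. For the middle band, I would label $E_o$ in a staircase pattern respecting an ordering of $V \setminus B$ (by degree, then broken by the intra-$U$ sums already computed), so that the $\sigma$-values across $V \setminus B$ become distinct and monotone in that order, while preserving the chosen ordering on $B$.

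The verification then splits into (a) distinctness of $\sigma$ on $B$, (b) distinctness of $\sigma$ on $V \setminus B$, and (c) $\min_{b \in B} \sigma(b) > \max_{v \notin B} \sigma(v)$. The main obstacle will be (c), since a $U$-vertex can have degree up to $n - 1$ and thus accumulates many middle- and bottom-band labels. The idea is to exploit the gap between the top and middle bands: every $b \in B$ is incident to $k-1$ top labels, contributing at least $(k-1)(m - \binom{k}{2}) + \binom{k}{2}$ just from $E_B$, and the staircase ordering on $E_o$ then provides the slack needed to dominate every non-$B$ sum. A secondary delicacy is (b), when a $W$-vertex and a $U$-vertex end up with similar tentative sums; the flexibility left in the middle-band staircase should suffice to break such ties, possibly by swapping the labels of two edges of $E_o$ of consecutive value.
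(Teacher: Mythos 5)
First, note that the paper does not actually prove Theorem~\ref{barrused}: it is quoted from Barrus, and the closest in-house analogues are the proofs of Theorems~\ref{dominanted} and~\ref{injectioned}, which share your architecture (partition the edges, give the clique the top labels, separate the outside vertices with a sorted ``staircase''), but use an alternating-path argument for the bulk edges where you use pure banding. Your three-band decomposition is sound, and the step you flag as the main obstacle, $\min_{b\in B}\sigma(b)>\max_{v\notin B}\sigma(v)$, in fact follows from the band separation alone, with no help from the staircase: a vertex $w$ with $N(w)\subsetneq B$ has degree at most $k-1$, so $\sigma(w)$ is a sum of at most $k-1$ distinct middle labels, which is strictly less than the sum of the $k-1$ top labels at any $b\in B$; and a vertex $u$ with $B\subseteq N(u)$ is adjacent to every $b$, so after cancelling the common label $f(bu)$ you can match the remaining $k-1$ middle labels of $u$ against the $k-1$ top labels of $b$, and the $|N(u)\cap U|$ bottom labels of $u$ against that many further middle labels of $b$ (of which $b$ has at least $|U|-1\ge|N(u)\cap U|$ to spare), each comparison going the right way. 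You should make this explicit rather than attributing the slack to the staircase, whose stated role here is not the actual mechanism.

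The genuine gaps are in your step (a) and in the bottom band. You fix an antimagic labelling of $K_k$ first and then ask the $E_o$ staircase to ``preserve the chosen ordering on $B$''; but the staircase is essentially forced by the ordering of $V\setminus B$, so the $E_o$ partial sums at the vertices of $B$ come out however they come out, and there is no reason they respect a pre-chosen enumeration --- and even if they did, an antimagic labelling of $K_k$ only guarantees gaps of $1$ between intra-clique sums, which cannot absorb arbitrary differences in the $E_o$ contributions. The repair is to invert the order of decisions, exactly as in the paper's Step~4 (resp.\ Step~3): run the staircase first, sort $B$ by the resulting partial sums $\sigma'$, and only then label $E_B$ lexicographically along that order (as in Property~\ref{ecartclique}), so that the intra-clique sums and the partial sums increase along the same enumeration and the totals on $B$ are strictly increasing. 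Separately, your requirement that the bottom band make the intra-$U$ contributions pairwise distinct is unachievable in general (if $U$ induces a perfect matching, the two endpoints of each matching edge receive equal intra-$U$ sums), though it is also unnecessary, since sorting $V\setminus B$ by degree and then by intra-$U$ partial sum already lets the staircase separate tied vertices. Finally, dismissing $k\le 2$ is not innocuous for $k=2$: the two clique vertices then share their unique top label, so $\sigma(b_1)-\sigma(b_2)$ is governed entirely by the middle band, which your staircase does not control; this case needs its own argument (compare Theorem~\ref{yilmaed}).
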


Notice that the case $|B| = 1$ means that $G$ has a universal vertex, implying $G$ is antimagic \cite{alon2004}.

We give an illustration of the structure of graphs described in Theorem $1$ in Figure \ref{barrusillus}.

\begin{figure}[h]
        \centering
        \begin{tikzpicture}
            \node at (0,-2) {$B$};

            \node at (-4,0) (u1) {$\bullet$};
            \node at (-4,1) (u2) {$\bullet$};
            \node at (-4,-1) (u3) {$\bullet$};

            \node at (-1,1) (v1) {$\bullet$};
            \node at (1,1) (v2) {$\bullet$};
            \node at (1,-1) (v3) {$\bullet$};
            \node at (-1,-1) (v4) {$\bullet$};

            \draw (u1.center) -- (v1.center);
            \draw (u2.center) -- (v1.center);
            \draw (u3.center) -- (v2.center);
            \draw (u3.center) -- (v3.center);

            \draw (v1.center) -- (v2.center) -- (v3.center) -- (v1.center);

            \draw (v1.center) -- (v4.center) -- (v2.center);
            \draw (v4.center) -- (v3.center);

            \node at (4,0.5) (w1) {$\bullet$};
            \node at (4,-0.5) (w2) {$\bullet$};
            \foreach \y in {1,2}
            \foreach \x in {1,2,3,4}
                {
                \draw (v\x.center) -- (w\y.center);
                }

            \node at (-4, -2) {$A$};
            \node at (4,-2) {$C$};
        \end{tikzpicture}
        \caption{Illustration of the structure of graphs described in Theorem \ref{barrused}: $A$ is an independent set made of vertices $u$ such that $N(u) \subset B$, and $C$ is the set of vertices $v$ such that $B \subset N(v)$ (with no particular structure).}
        \label{barrusillus}
    \end{figure}
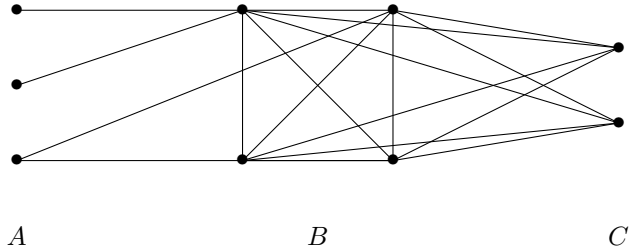

This result was improved by Sliva in 2012 \cite{sliva2012}:

\begin{thm}\label{slivaed}
   Let $G = (V,E)$ be a graph. $G$ is antimagic if $G$ admits a partition $A,B,C$ of $V$ such that:
    
    \begin{itemize}
        \item $B$ induces a connected $r$-regular subgraph, with $2 \leq r \leq |B| - 1$.
        \item For every vertex $v \in A$, $N(v) \subset B$.
        \item For every vertex $v \in C$, $B \subset N(v)$.
        \item If $r < |B| - 1$, then $C$ has $\alpha$ vertices, with $\alpha \geq \frac{2|B|^2 + |B|}{r} - 2 |B| + 2$ and $|E(A,B)| \leq \frac{\alpha}{2}$
    \end{itemize}
\end{thm}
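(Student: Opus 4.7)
The plan is to build the bijection $f:E\to\{1,\ldots,m\}$ in four stages corresponding to the edge classes $E(A,B)$, $E(B,B)$, $E(B,C)$, $E(C,C)$, assigning each class a consecutive block of labels in that order, smallest to $E(A,B)$ and largest to $E(C,C)$. Writing $b=|B|$, $c=|C|$, $m_1=|E(A,B)|$, this ordering by itself already forces the block separation $\sigma(u)<\sigma(v)<\sigma(w)$ whenever $u\in A$, $v\in B$, $w\in C$, provided the width of the $B$-sum range can be controlled. The real work is then to ensure distinctness inside each class.

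First, assign the labels $\{1,\ldots,m_1\}$ to $E(A,B)$ so that the sums of the vertices of $A$ are pairwise distinct. Since $A$ is independent (every edge incident to an $A$-vertex goes to $B$), this is essentially a greedy bipartite assignment and any reasonable scheme works. Second, since $B$ is connected and $r$-regular with $r\geq 2$, I would label $E(B,B)$ with the block $\{m_1+1,\ldots,m_1+br/2\}$ using a standard decomposition argument (Euler tour when $r$ is even, ear decomposition otherwise), arranging the labels so that the partial sums $\sigma_B(v)=\sum_{u\in N(v)\cap B}f(uv)$ are distinct and confined to a short interval. Such constructions already appear in the classical literature on antimagic labellings of regular graphs and are the natural analog, in our looser setting, of the clique labelling that drives Theorem~\ref{barrused}.

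Third, since every vertex of $C$ is adjacent to every vertex of $B$, $E(B,C)$ is a copy of $K_{b,c}$; I would label its $bc$ edges with the next block as a $b\times c$ matrix using a Latin-square-like anti-diagonal pattern. The resulting row-sums (the contributions added to each $B$-sum) can be made to form an arithmetic-like progression of common difference large enough to swallow both the $E(B,B)$-variation (at most about $br/2$ from stage two) and the $E(A,B)$-variation (at most $m_1\leq c/2$ from stage one), which guarantees that the $B$-sums are pairwise distinct. Simultaneously, the column-sums (going to each $C$-vertex) can be kept in a narrow window. Finally, label $E(C,C)$ with the top $|E(C,C)|$ labels using any standard antimagic procedure on $G[C]$, making the $C$-sums pairwise distinct. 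Because these are the largest labels in the whole bijection, they dominate and every $C$-sum stays above every $B$-sum.

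The main obstacle is the third stage: the matrix labelling of $E(B,C)$ must simultaneously separate $B$-sums and keep the $C$-contributions aligned enough that stage four can still differentiate $C$-sums without pushing any $C$-sum down into the $B$-range. This is precisely where the hypothesis $\alpha\geq (2|B|^{2}+|B|)/r-2|B|+2$ with $|E(A,B)|\leq \alpha/2$ enters: it provides enough room inside the $b\times c$ matrix to produce row-sum spacings exceeding the combined variation $br/2+c/2$ coming from stages one and two, while leaving enough slack above for $E(C,C)$ to finish the job. Verifying that this specific inequality is exactly sharp for the chosen matrix pattern, and checking the boundary cases (small $r$, tight $\alpha$), will be the bulk of the detailed calculation.
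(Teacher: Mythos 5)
First, note that the paper does not prove this statement at all: Theorem~\ref{slivaed} is quoted from Sliva's paper \cite{sliva2012} as part of the survey of known results, so there is no in-paper proof to compare against. Judged on its own, your proposal is a reasonable high-level outline of the natural strategy (block the labels as $E(A,B) < E(B,B) < E(B,C) < E(C,C)$ and use the complete bipartite graph between $B$ and $C$ to separate the $B$-sums), but it is not a proof: every step that carries actual difficulty is asserted rather than carried out. Concretely, in stage two you need a labelling of a connected $r$-regular graph whose vertex sums are pairwise distinct \emph{and} confined to an interval of width roughly $|B|r/2$; this is not a ``standard decomposition argument'' --- antimagicness of regular graphs alone was a hard problem resolved only years after Sliva's result, and the extra confinement requirement is an additional constraint. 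In stage three, a Latin-square arrangement of a $b\times c$ block of consecutive integers makes the row sums \emph{equal}, which is the opposite of what you want; producing row sums in arithmetic progression with common difference exceeding $|B|r/2+|C|/2$ while simultaneously keeping the column sums in a narrow window is exactly the combinatorial heart of the theorem, and you explicitly defer it (``will be the bulk of the detailed calculation''), including the verification that the hypothesis $\alpha\geq(2|B|^2+|B|)/r-2|B|+2$ suffices. In stage four, ``any standard antimagic procedure on $G[C]$'' does not exist: $G[C]$ is an arbitrary graph, and distinguishing the $C$-sums must instead exploit the per-vertex freedom left over from the $E(B,C)$ column sums (e.g., a sort-and-assign argument as in the paper's own Steps 3--4 for Theorem~\ref{dominanted}), which you do not describe.

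There is also a case your plan cannot cover as stated: when $r=|B|-1$ the theorem imposes no lower bound on $|C|$, so $C$ may be empty or very small. Then $E(B,C)$ is tiny or absent, the mechanism you rely on to separate the $B$-sums disappears, and the separation must come entirely from the labelling of the clique $B$ itself (essentially reducing to an argument in the spirit of Theorem~\ref{barrused}). Your outline never branches on this case, so even granting all the deferred calculations it would not yield the full statement.
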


Finally, the following result was proved by Yilma in 2013 \cite{yilma2013}; note that it is similar to Theorem \ref{barrused} with $|B| = 2$.

\begin{thm}
    \label{yilmaed}
    Let $G = (V,E)$ be a graph. If $G$ has a vertex $x$ such that $d(x) = \Delta(G) \geq \frac{2n}{3}$, and $x$ has a neighbor $y$ such that $\{x,y\}$ is a dominating clique in $G$, then $G$ is antimagic.
\end{thm}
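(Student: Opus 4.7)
The plan is to partition $V\setminus\{x,y\}$ into $A=N(x)\cap N(y)$, $B=N(x)\setminus(N(y)\cup\{y\})$ and $C=N(y)\setminus(N(x)\cup\{x\})$. Because $\{x,y\}$ is a dominating clique, these three sets cover $V\setminus\{x,y\}$. From the hypothesis $d(x)\ge 2n/3$ one obtains $|A|+|B|=d(x)-1\ge 2n/3-1$ and $|C|=n-1-d(x)\le n/3-1$, so the asymmetry $|A|+|B|\ge 2|C|+1$ holds. Write $E_1$ for the set of edges of $G$ not incident to $x$ or $y$, and set $\ell=|E_1|$.

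I would define the labelling in four consecutive blocks, assigning labels from smallest to largest. Block~(1) puts $\{1,\ldots,\ell\}$ on $E_1$; block~(2) puts the next $|C|$ labels on the edges $\{yc:c\in C\}$; block~(3) puts the following $|A|+1$ labels on the edge $xy$ together with the edges $\{ya:a\in A\}$; and block~(4) puts the top $|A|+|B|$ labels on the edges $\{xv:v\in A\cup B\}$. Within each block I apply the familiar rank trick: sort the relevant vertices by the partial vertex-sum computed up to that point and assign the block's labels to their incident edges in the same order. This forces the vertex-sums to be strictly increasing on each processed class.

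The main task is then to verify three kinds of distinctness. First, within $C$ (after block~(2)) and within $A\cup B$ (after block~(4)), the rank trick produces strictly increasing $\sigma$-values, hence distinct. Second, across classes, every $\sigma(v)$ for $v\in A\cup B$ satisfies $\sigma(v)\ge m-|A|-|B|+1$ (since its edge to $x$ is from the top block), whereas $\sigma(c)\le \sigma_0(c)+\ell+|C|$ for every $c\in C$, where $\sigma_0(c)$ is the $E_1$-partial sum at $c$. Substituting $\ell=m-d(x)-d(y)+1$ and $d(y)=|A|+|C|+1$, the required inequality $\sigma(c)<m-|A|-|B|+1$ reduces to an upper bound on $\sigma_0(c)$; I would arrange block~(1) so that the smallest labels of $\{1,\ldots,\ell\}$ are concentrated on the edges of $E_1$ incident to $C$, which is feasible because $|C|\le n/3-1$ is small, thereby achieving the needed bound. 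Third, $\sigma(x)$ is the sum of $d(x)$ of the top labels and exceeds every other $\sigma$-value, while $\sigma(y)$ involves the two middle blocks and the label on $xy$; a direct computation exploiting $d(x)\ge 2n/3$ separates $\sigma(y)$ from $\sigma(x)$ and from every $\sigma(v)$ with $v\in A\cup B\cup C$.

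The step I expect to be the main obstacle is the second, cross-class, check. It is precisely here that the $2n/3$ threshold is consumed, through the imbalance $|A|+|B|\ge 2|C|+1$: if $d(x)$ were noticeably smaller, the $C$-sums could overlap the $A\cup B$-sums and one would need a genuinely different argument (or a finer choice of labelling in block~(1)). Everything else, including the handling of $\sigma(x)$ and $\sigma(y)$, should follow from standard rank-trick bookkeeping.
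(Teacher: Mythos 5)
The paper does not actually prove this statement: Theorem~\ref{yilmaed} is quoted from Yilma's 2013 paper, so there is no in-house proof to compare against. Judged on its own, your proposal has a genuine gap, and it sits exactly at the step you flag as ``the main obstacle.''

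The cross-class separation you need is $\sigma_0(c)+\ell+|C| < m-|A|-|B|+1$ for every $c\in C$. Since $m-\ell = d(x)+d(y)-1 = 2|A|+|B|+|C|+1$, this simplifies to $\sigma_0(c) \le |A|+1$. That bound is unattainable in general, no matter how you arrange block~(1): a vertex $c\in C$ with $t$ incident edges in $E_1$ satisfies $\sigma_0(c)\ge 1+2+\cdots+t = t(t+1)/2$ because the labels are distinct positive integers, and $t$ can be as large as $d(c)-1\le \Delta(G)-1$. The hypothesis only bounds $|C|$ (the number of non-neighbours of $x$), not their degrees; if, say, the vertices of $C$ induce a clique or each has many neighbours outside $\{x,y\}$, then $\sigma_0(c)=\Theta(n^2)$ while $|A|+1\le n$. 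Concentrating the smallest labels of $\{1,\dots,\ell\}$ on $E_1$-edges meeting $C$ does not help, since even the smallest possible labels on $t$ edges sum to $t(t+1)/2$. Consequently the sums of high-degree vertices of $C$ can land anywhere among, and collide with, the sums of vertices of $A\cup B$ whose only large label is their single top-block edge to $x$. This is precisely the difficulty that makes Yilma's theorem nontrivial (it strengthens the Alon et al.\ result from $\Delta\ge n-2$ down to $\Delta\ge 2n/3$ under the domination hypothesis), and it requires a mechanism your scheme does not supply --- e.g.\ comparing each $c$ against specific vertices of $A\cup B$ rather than against a uniform lower bound, or a more global accounting of the $E_1$-sums.

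Two secondary points. First, the within-class steps (rank trick inside $C$ and inside $A\cup B$) are fine and are the same device used in the proofs of Theorems~\ref{dominanted} and~\ref{injectioned} in this paper. Second, the separations of $\sigma(x)$ and $\sigma(y)$ from the rest are not ``standard bookkeeping'': a vertex $a\in A$ of degree $\Delta(G)$ carries one top-block label plus up to $\Delta(G)-1$ further labels, and a crude comparison with $\sigma(x)$ (which is the sum of the $d(x)-1$ top labels plus $f(xy)$) does not close when $m$ is large relative to $d(x)^2$; this also needs an actual argument rather than an appeal to a ``direct computation.''
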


    In the case $|B| \geq 4$, we improve Theorem \ref{barrused} with the following result:

\begin{restatable}{thm}{dominanted}\label{dominanted}
    Let $G = (V,E)$ be a graph with a dominating clique $K$ of order $|K| \geq 4$. If, for every vertex $v \in V \setminus K$, $d(v) \leq \min\limits_{u \in K} d(u)$, then $G$ is antimagic.
\end{restatable}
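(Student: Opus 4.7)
My plan is to label the edges in three size-ordered blocks: let $m_{VV}=|E(V\setminus K,V\setminus K)|$ and $m_{KV}=|E(K,V\setminus K)|$; assign the smallest $m_{VV}$ labels to the edges with both endpoints in $V\setminus K$, the next $m_{KV}$ labels to the edges of $E(K,V\setminus K)$, and the top $\binom{|K|}{2}$ labels to the edges of the clique $K$. The three target properties---pairwise distinct sums inside $V\setminus K$, pairwise distinct sums inside $K$, and $\sigma(u)>\sigma(v)$ for all $u\in K$ and $v\in V\setminus K$---are then treated separately, with the block structure designed to handle the third property almost automatically and to leave enough freedom inside each block to enforce the first two.

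For the domination $\sigma(u)>\sigma(v)$, I would split according to whether $uv\in E$. When $uv\notin E$, the $|K|-1$ clique labels at $u$ each strictly exceed every label at $v$; together with the fact that $u$'s outside labels sit in a block strictly above $v$'s internal labels, and with the hypothesis $d(u)\geq d(v)$, a label-by-label pairing argument yields $\sigma(u)>\sigma(v)$. When $uv\in E$, I would cancel the shared label $f(uv)$ and compare the remaining labels; the tight subcase is when $v$ is adjacent to every vertex of $K$, and there the inequality $d(u)\geq d(v)$ forces $u$ to have at least one more outside neighbor than $v$, which, combined with the block hierarchy, closes the gap.

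For distinctness within $V\setminus K$, I would order the vertices $v_1,\ldots,v_t$ by non-decreasing degree and assign labels greedily, processing vertices in this order and giving to each $v_i$'s newly-incident edges the smallest still-unused labels in their respective blocks; any residual ties can be corrected by swapping two labels within the middle block, which is possible because every $v_i$ has at least one edge to $K$ (as $K$ is dominating) and thus at least one middle-block label to tune. Finally, once the contribution $c_u$ of $u$'s outside edges is fixed for each $u\in K$, I must label the clique edges with the top $\binom{|K|}{2}$ labels so that $c_u$ plus the new clique sum at $u$ is pairwise distinct over $u\in K$.

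This last step is the main obstacle, and it is where the hypothesis $|K|\geq 4$ is genuinely used: I would prove, via a local-swap argument starting from any antimagic labeling of the clique, that every tie $c_i+\sigma(u_i)=c_j+\sigma(u_j)$ can be broken by swapping the labels on the pair of edges $u_iu_\ell$ and $u_ju_\ell$ for a suitable third vertex $u_\ell\in K$; the existence of at least two candidate $u_\ell$'s, guaranteed by $|K|\geq 4$, gives enough room to destroy each tie without introducing a new one. Organising these local swaps so that all ties are eliminated simultaneously will be the bulk of the technical work.
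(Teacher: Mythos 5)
Your three-block scheme (smallest labels inside $V\setminus K$, middle labels on $E(K,V\setminus K)$, largest labels on the clique) does make the separation $\sigma(u)>\sigma(v)$ for $u\in K$, $v\in V\setminus K$ essentially automatic: since $uv\notin E$ forces $|N(v)\cap K|\le k-1$, you can always injectively pair each cross-edge of $v$ with a clique edge of $u$ and each internal edge of $v$ with a leftover edge of $u$ (using $d(v)\le d(u)$), every comparison being across blocks. This is actually a cleaner route to the separation property than the paper takes: the paper splits the cross-edges into two blocks (reserving one cross-edge per outside vertex for a later block) and must then control the middle-block sums by labelling along maximal alternating paths, switching between the largest and smallest available labels. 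The price the paper pays there is the price of what makes the other two properties come for free, and that is exactly where your proposal has genuine gaps.

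Concretely, both of your tie-breaking procedures are unproved and, as stated, would not obviously terminate without creating new ties. For distinctness inside $V\setminus K$, a swap of two middle-block labels changes the sums at \emph{four} vertices (two in $V\setminus K$, two in $K$), so repairing one tie can create another; you give no invariant preventing this cascade. The paper avoids the issue entirely by reserving a system of distinct cross-edges, one per vertex of $V\setminus K$, sorting those vertices by their partial sums, and assigning the reserved edges strictly increasing labels --- distinctness then holds by construction, with no repair step. For distinctness inside $K$, you explicitly defer ``the bulk of the technical work'': breaking a tie $\sigma(u_i)=\sigma(u_j)$ by swapping $f(u_iu_\ell)$ with $f(u_ju_\ell)$ shifts both sums by $\pm\bigl(f(u_ju_\ell)-f(u_iu_\ell)\bigr)$, and each of the two vertices must then avoid up to $k-2$ other clique sums, while only $k-2$ choices of $\ell$ are available; nothing in the proposal shows these constraints are simultaneously satisfiable, nor that fixing all ties can be scheduled consistently. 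Again the paper sidesteps this: sort the clique vertices by partial sum and assign the top $\binom{k}{2}$ labels to the clique edges in lexicographic order, which yields strictly increasing clique sums outright. So the separation argument in your proposal is sound and even simpler than the paper's, but the two within-part distinctness arguments --- which are where the actual content of the theorem lies in your decomposition --- are missing.
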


Indeed, using the notations from Figure \ref{barrusillus}, we can easily see that every vertex $v \in A$ is such that $d(v) \leq d(u)$ for any $u \in B$. Moreover, since any $u \in B$ is incident to every vertex $v \in C$, we also obtain $d(v) \leq d(u)$.

We also introduce the following definition (recall that $\sigma(u)$ is the sum of labels over edges incident to $u$):

\begin{definition}
    We say that a given graph $G = (V,E)$ admits a \emph{$C$-antimagic injection} if there exists an injective labelling $f : E \rightarrow \{1,2,\ldots,m + C\}$ such that the values of the $\sigma(u)$, for $u \in V$, are pairwise distinct.
\end{definition}

If such a labelling exists, we say that $G$ is $C$-antimagic, for some $C \geq 0$. Note that, if $C = 0$, the labelling (as well as the graph) is antimagic.

This allows us to show the following result:

\begin{restatable}{thm}{injectioned}\label{injectioned}
    Let $G = (V,E)$ be a graph with a dominating clique $K$ of order $k = |K| \geq 3$. If $m \geq 3(n-k) - 2 + \frac{k(k-1)}{2}$, then $G$ is $3$-antimagic.
\end{restatable}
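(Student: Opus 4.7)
The plan is to partition the edges and label the two parts separately. Set $a=\binom{k}{2}$ and $b=m-a$, so the hypothesis reads $b\geq 3(n-k)-2$. Write $E_K$ for the $a$ edges inside $K$ and $E^{*}=E\setminus E_K$ for the $b$ edges with at least one endpoint in $V\setminus K$. The idea is to use $b$ labels chosen from $\{1,\dots,b+3\}$ (skipping three of them) on $E^{*}$, and the $a$ largest labels $\{b+4,\dots,m+3\}$ on $E_K$; this yields an injection into $\{1,\dots,m+3\}$ as required.

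The first and main phase is to label $E^{*}$ injectively with values in $\{1,\dots,b+3\}$ so that the sums $\sigma(w)$ are pairwise distinct for $w\in V\setminus K$. The threshold $3(n-k)-2$ is precisely what is needed here: it corresponds to an average of about three outside-incident edges per vertex of $V\setminus K$, giving just enough leeway to shape each individual sum. I would order the vertices of $V\setminus K$ as $w_1,\dots,w_{n-k}$ (for instance by non-decreasing degree in $G$) and build the labeling incrementally, reserving for each $w_i$ one ``private'' edge whose label is tuned to force $\sigma(w_i)$ into a value avoiding all previously committed sums. The three unused labels in $\{1,\dots,b+3\}$, provided by the relaxation to $3$-antimagic, are the essential tool for dodging the last few collisions in extremal configurations, such as when $b=3(n-k)-2$ and many $w_i$ have degree one or two.

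The second phase is to label $E_K$ with the large labels $\{b+4,\dots,m+3\}$, ensuring that the $k$ sums at vertices of $K$ are pairwise distinct and also disjoint from the $n-k$ sums already fixed in Phase~1. Since $K$ is a clique on $k\geq 3$ vertices and the label set is shifted high, there is a lot of flexibility: starting from any antimagic labeling of $K_k$ on the labels $\{b+4,\dots,m+3\}$, one performs local swaps — exchanging the labels on two edges $u_pu_j$ and $u_pu_{j'}$ sharing an endpoint $u_p$ shifts $\sigma(u_j)$ and $\sigma(u_{j'})$ by opposite amounts without affecting any other sum — to break each of the at most $\binom{k}{2}+k(n-k)$ forbidden coincidences one by one.

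The main obstacle is Phase~1: verifying that three spare labels are always enough to keep the sums at $V\setminus K$ pairwise distinct when the outside edge count is tight, which will likely require a careful case analysis of the low-degree vertices of $V\setminus K$. Phase~2 is comparatively routine, thanks to the abundance of antimagic labelings of $K_k$ and the simple local swap moves on the shifted label set.
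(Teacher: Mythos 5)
There is a genuine gap, in fact two. First, your Phase~1 is exactly the hard part and you leave it unproved: ``reserving for each $w_i$ one private edge whose label is tuned'' does not work as stated, because when you reach $w_i$ the tuning label must simultaneously be one of the still-unused labels and avoid up to $i-1$ forbidden sum values, and with only three spare labels in $\{1,\dots,b+3\}$ there is no slack to guarantee both. The paper's resolution is a concrete trick you are missing: give each vertex of $V\setminus K$ one private edge into $K$, label \emph{all other} outside edges first and arbitrarily, then sort the vertices of $V\setminus K$ by their partial sums and assign to the private edges the labels $1,4,7,\dots,3(n-k)-2$ in increasing order. This forces the final sums on $V\setminus K$ to be pairwise distinct \emph{and separated by at least $3$}, with no case analysis and no spare labels consumed; the hypothesis $m\geq 3(n-k)-2+\binom{k}{2}$ is only there so that this arithmetic progression fits below the top $\binom{k}{2}$ labels (it is a label-space condition, not the average-degree condition you read it as).

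Second, your Phase~2 is not routine and actually fails for small $k$. With the clique label set fixed to the top $a$ labels, the multiset of clique sums is rigid under your swap moves when $k=3$: for labels $\{b+4,b+5,b+6\}$ the three vertex sums are always $\{2b+9,2b+10,2b+11\}$ no matter how the labels are assigned, so if an outside vertex happens to hit one of these values you cannot escape by swapping, and you have already spent all three extra labels in Phase~1. Even for larger $k$, each swap moves two clique sums in opposite directions, so killing coincidences ``one by one'' can create new ones, and this needs the careful ordered post-processing the paper performs. The paper spends the three extra labels $\{m+1,m+2,m+3\}$ precisely here, on the single edge $u_{k-1}u_k$ at the very end: because the outside sums are separated by at least $3$, each of $u_{k-1}$ and $u_k$ can collide with at most one outside vertex over the three candidate labels, so one choice works. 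Your allocation of the slack to Phase~1 and your claim that Phase~2 is easy invert where the difficulty actually lies.
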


Recall that the number of edges in a clique $K$ of order $k$ is $\frac{k(k-1)}{2}$.

In the rest of the article, at any point when constructing a labelling, we will denote by $\sigma'(u)$ the partial sum of edges incident to $u$ that have already been labelled. Once every edge incident to $u$ has been labelled, $\sigma'(u) = \sigma(u)$.

\section{Proof of Theorem \ref{dominanted}}

\begin{proof}
    Let $G = (V,E)$ be a graph with a dominating clique $K = \{u_i, 1 \leq i \leq k\}$ with $k \geq 4$, such that, for every vertex $v \in V \setminus K$, $d(v) \leq \min\limits_{u \in K} d(u)$. Let $V \setminus K = \{v_i, 1 \leq i \leq n - k\}$.

    We start by partitioning the set of edges $E = E_1 \cup E_2 \cup E_3 \cup E_4$ and we will label them in this order. The sets are defined in the following way:

    \begin{itemize}
        \item $E_1$ is the set of edges between two vertices of $V \setminus K$.
        \item $E_3$ is a set of edges such that every vertex $v \in V \setminus K$ has exactly one incident edge in $E_3$ (towards a vertex of $K$, otherwise the edge would be in $E_1$). Note that this is always possible since $K$ is a dominating set in $G$; also note that $|E_3| = n - k$.
        \item $E_4$ is the set of edges between two vertices of $K$.

        \item $E_2$ is the set of the remaining edges in $E$. Note that every edge in $E_2$ has one endpoint in $K$ and one endpoint in $V \setminus K$.
    \end{itemize}

    We now construct an explicit labelling of $E$ with the following steps:

    \begin{itemize}
        \item[Step $1$:] We label the edges of $E_1$, arbitrarily assigning labels in $[1;|E_1|]$.

        \item[Step $2$:] We label the edges of $E_2$, picking labels in $[|E_1| + 1; |E_1| + |E_2|]$ using an algorithm we will describe later on.

        \item[Step $3$:] We sort the vertices of $V \setminus K$ by increasing order of their value of $\sigma'$: $\sigma'(v_1) \leq \sigma'(v_2) \leq \ldots \leq \sigma'(v_{n-k})$. We then label the edges of $E_3$ following this order - recall that there is exactly one edge incident to each $v_i$ in $E_3$ - , picking labels in increasing order in $[|E_1| + |E_2| + 1; |E_1| + |E_2| + |E_3|]$.

        \item[Step $4$:] We sort the vertices of $K$ by increasing order of their value of $\sigma'$: $\sigma'(u_1) \leq \sigma'(u_2) \leq \ldots \leq \sigma'(u_k)$. We then label edges of $E_4$ by visiting them in lexicographic order, and picking labels in increasing order in $[|E_1|+ |E_2| + |E_3| + 1; |E_1| + |E_2| + |E_3| + |E_4|]$. Hence, $u_1u_2$ will be labelled by $|E_1|+|E_2|+|E_3|+1$, $u_1u_3$ with $|E_1|+|E_2|+|E_3|+2$, and so on.
    \end{itemize}

    For any two vertices $v_i,v_j \in V \setminus K$, with $i < j$, notice that Step $3$ allows us to assert that, once the labelling is done, $\sigma(v_i) < \sigma(v_j)$. Similarly, for any two vertices $u_i,u_j \in K$, with $i < j$, Step $4$ allows us to assert that, once the labelling is done, $\sigma(u_i) < \sigma(u_j)$.
    
    In order to obtain an antimagic labelling, we only need to assign the labels during Step $2$ in order to guarantee that, at the end of the labelling, for any two vertices $u \in K$ and $v \in V \setminus K$, $\sigma(u) \neq \sigma(v)$. We will actually prove that $\sigma(u) > \sigma(v)$ by assigning labels along \emph{maximal paths} made of edges in $E_2$, and alternating in those paths between the smallest label available and the largest one.

    We now wish to formally describe the algorithm used to label the edges of $E_2$. Suppose that $E_2$ is not empty (otherwise there is nothing to do during this step) and let $\alpha = \min\limits_{u \in K}d(u)$. We have $\alpha = k - 1 + \beta$ for some $\beta \geq 0$. It was shown in \cite{alon2004} by Alon et al. that graphs $G$ with $\Delta(G) \geq n - 2$ are antimagic. We will then suppose in the following that $\alpha \leq n - 3$.

    To label the edges of $E_2$, we will visit them by creating \emph{maximal} paths; this means that every time the current path can be extended by picking and labelling another edge in $E_2$, we will do so.

    We start from a vertex in $K$. Recall that every edge in $E_2$ has one endpoint in $K$ and one endpoint in $V \setminus K$; this allows us to build a path alternating between vertices in $K$ (when an even number of edges have been labelled in $E_2$) and vertices in $V \setminus K$ (when an odd number of edges have been labelled in $E_2$). When we reach a vertex $u \in K$, if $u$ has a neighbor $v$ such that $uv \in E_2$ and $uv$ has not yet been labelled, we extend the current path by labelling $uv$. Otherwise, we arbitrarily pick another vertex $u' \in K$ such that $u'$ has an incident edge $u'v \in E_2$ not yet labelled, and we start another path by labelling $u'v$.

    Similarly, every time we reach a vertex $v \in V \setminus K$: if $v$ has an incident edge $vu \in E_2$ not yet labelled, we extend the current path by labelling $vu$, otherwise we start a new path by arbitrarily picking a vertex $v' \in V \setminus K$ with an incident edge $v'u \in E_2$ not yet labelled, and we label $v'u$.

    Every time we go from a vertex in $K$ to a vertex in $V \setminus K$, we will use the largest label available, and every time we go from a vertex in $V \setminus K$ to a vertex in $K$, we will use the smallest label available. Alternating between large labels and small labels will allow us to obtain upper bounds on the values of $\sigma(v)$, for every $v \in V \setminus K$, and guarantee that they are lower than the values of $\sigma(u)$, for every $u \in K$.

    Formally, we begin Step $2$ by arbitrarily picking some vertex $u \in K$, such that $u$ has at least one incident edge $uv \in E_2$. We label $uv$ with the largest label $|E_1| + |E_2|$.

    Then, when we reach a vertex $v \in V\setminus K$ once $2\gamma + 1$ edges have been labelled in $E_2$, for some $\gamma \geq 0$: if $v$ has an incident edge $uv \in E_2$ not yet labelled, we extend the current path by labelling it with $|E_1| + \gamma + 1$. Otherwise, every edge incident to $v$ in $E_2$ has been labelled, and we arbitrarily pick another vertex $v' \in V\setminus K$ such that $v'$ has an incident edge $v'u \in E_2$ not yet labelled (if such a $v'$ does not exist, the labelling of $E_2$ is done). We then label $v'u$ with $|E_1| + \gamma + 1$.

    Similarly, when we reach a vertex $u \in K$ once $2\gamma$ edges have been labelled in $E_2$, for some $\gamma \geq 1$: if $u$ has an incident edge $uv \in E_2$ not yet labelled, we label $uv$ with $|E_1| + |E_2|- \gamma$. If this is not the case, we arbitrarily pick another vertex $u' \in K$ such that $u'$ has an incident edge $u'v \in E_2$ not yet labelled, and we label $u'v$ with $|E_1| + |E_2| - \gamma$.

    ~\hfill

    Let $v \in V \setminus K$. Every edge incident to $v$ in $E_2$ (recall that there are at most $\alpha - 1$ such edges) was labelled during one of the three following cases:

    \begin{itemize}
        \item $v$ is an interior vertex of one of the maximal paths; in this case, two edges incident to $v$ are labelled consecutively, and their sum is equal to $(|E_1| + \gamma + 1) + (|E_1| + |E_2| - \gamma) = 2|E_1| + |E_2| + 1$.

        \item $v$ is the starting vertex of a maximal path, with $vw$ the first edge of this path; then $vw$ is labelled with a label $f(vw) \leq \frac{1}{2}(|E_1| + 1 + |E_1| + |E_2|)$ since we always use the smallest label available when going from a vertex in $V \setminus K$ to a vertex in $K$.

        \item $v$ is the endpoint of a maximal path, with $wv$ the last edge of the path. This means it was not possible to extend the path by finding another unlabelled edge in $E_2$ incident to $v$, and it can only happen at most once. In this case, $wv$ is labelled with a label $f(wv) \leq |E_1| + |E_2|$.
    \end{itemize}

    Overall, after Step $2$, we obtain $\sigma'(v) \leq \frac{1}{2}(2|E_1| + |E_2| + 1)(\alpha - 2) + |E_1| + |E_2| = (\alpha - 1)|E_1| + \frac{\alpha}{2}|E_2| + \frac{\alpha-2}{2}$.

    Similarly, for every vertex $u \in K$, every edge incident to $u$ in $E_2$ was labelled during one of the three following cases:

    \begin{itemize}
        \item $u$ is an interior vertex of one of the maximal paths; in this case, two edges incident to $u$ are labelled consecutively, and their sum is equal to $(|E_1| + (\gamma - 1) + 1) + (|E_1| + |E_2| - \gamma) = 2|E_1| + |E_2|$.

        \item $u$ is the starting vertex of a maximal path, with $uw$ the first edge of this path; then, $uw$ is labelled with a label $f(uw) \geq \frac{1}{2}(|E_1| + 1 + |E_1| + |E_2|)$ since we always use the largest label available when going from a vertex in $K$ to a vertex in $V \setminus K$.

        \item $u$ is the endpoint of a maximal path, with $uw$ the last edge of the path. This can only happen once, and $uw$ was labelled with some $f(uw) \geq |E_1| + 1$.
    \end{itemize}

    Overall, after Step $3$, we obtain $\sigma'(u) \geq \frac{2|E_1| + |E_2|}{2}(\beta - 1) + |E_1|+ 1 = \beta|E_1| + \frac{\beta - 1}{2}|E_2| + 1$. Recall that edges incident to $u$ in $E_3$, labelled during Step $3$, are all labelled with a label greater than $|E_1| + |E_2| + 1$.

    Let $v \in V \setminus K$. Recall that the remaining edge incident to $v$ is labelled during Step $3$ (and that there is only one left to label once Steps $1$ and $2$ are done), and that once Step $3$ is done, all edges incident to $v$ will have been labelled. Then, we obtain, for every $v \in V \setminus K$ and every $u \in K$:

    \begin{align*}
        \sigma(v) &\leq (|E_1| + |E_2| + |E_3|) + (\alpha-1)|E_1| + \frac{\alpha}{2}|E_2| + \frac{\alpha - 2}{2} \\
        \sigma(u) &\geq (k-1)(|E_1| + |E_2| + |E_3|) + \frac{k(k-1)}{2} + \beta|E_1| + \frac{\beta - 1}{2}|E_2| + 1
    \end{align*}

    where the $(k-1)(|E_1| + |E_2| + |E_3|) + \frac{k(k-1)}{2}$ term is the minimal sum of labels of edges incident to $u$ in $E_4$ (since for every vertex $u \in K$, there are $k-1$ edges incident to $u$ in $E_4$, and edges in $E_4$ are the last to be labelled).

    Let $\Gamma = \sigma(u) - \sigma(v)$. We obtain, recalling that $\beta - 1 - \alpha = -k$:

    \begin{align*}
        \Gamma &\geq (k-2)(|E_1| + |E_2| + |E_3|) + \frac{k(k-1)}{2} + 1 + |E_1|(2 - k) - \frac{k}{2}|E_2| - \frac{\alpha - 2}{2} \\
        & \geq \left(\frac{k}{2} - 2\right)|E_2| + (k-2)|E_3| + \frac{k(k-1)}{2} + 1 - \frac{\alpha - 2}{2}
    \end{align*}

    We wish to show that $\Gamma > 0$. Recall that $|E_3| = n - k$, $\alpha \leq n - 3$, and $k \geq 4$. This yields $\left(\frac{k}{2} - 2\right)|E_2| \geq 0$ and $(k-2)|E_3| - \frac{\alpha-2}{2} \geq 2|E_3| - \frac{\alpha - 2}{2} = \frac{4(n-k) - \alpha + 2}{2}$.

    We obtain $\Gamma \geq \frac{1}{2}(4(n-k) + 2 - \alpha) + \frac{k(k-1)}{2}+ 1 \geq \frac{k(k-1)}{2} - \frac{\alpha - 2}{2} + 2(n-k)$.

    If $4(n-k) + 2 \geq \alpha$, we can immediately conclude that $\Gamma \geq \frac{k(k-1)}{2} + 1 > 0$. Suppose then that $4(n-k) + 2 \leq \alpha \leq n - 3$. We have $k \geq \frac{3}{4}n + \frac{5}{4}$, meaning $k \geq \frac{3}{4}n$. In this case, we obtain, since $\alpha \leq n - 3$:
    
    \begin{align*}
    \frac{k(k-1)}{2} - \frac{\alpha - 2}{2} &\geq \frac{3n(\frac{3}{4}n - 1)}{8} - \frac{n - 5}{2} \\
    & \geq \frac{1}{8}\left(\frac{9}{4}n^2-7n+20\right) > 0
    \end{align*}

    Therefore, $\Gamma > 0$ in every case, meaning that, for every $u \in K$ and $v \in V \setminus K$, we always obtain $\sigma(u) > \sigma(v)$. This, in turn, means that the labelling obtained is antimagic and that $G$ is antimagic.
    
\end{proof}

\section{Proof of Theorem \ref{injectioned}}

We recall Theorem \ref{injectioned}:

\injectioned*

\begin{proof}

    Let $K = \{u_i, 1 \leq i \leq k\}$ and $V \setminus K = \{v_i, 1 \leq i \leq n - k\}$. We start by partitioning the set of edges to label $E = E_1 \cup E_2 \cup E_3$, in the following way:

    \begin{itemize}
        \item $E_3$ is the set of edges between two vertices of $K$.
        \item $E_2$ is a set of edges such that every vertex $u \in V \setminus K$ has exactly one incident edge in $E_2$ (towards a vertex of $K$). This is always possible since $K$ is a dominating clique in $G$.
        \item $E_1$ is the set of the remaining edges.
    \end{itemize}

    We will proceed similarly as in the proof of Theorem \ref{dominanted}, but with an additional idea: we want to create some \lq space\rq \, between the values of $\sigma(u), u \in V$, once the labelling is done, in order to be able to switch some labels to resolve some hypothetical collisions between some $\sigma(u)$. In order to do so, we start by defining the set of labels for the three sets $E_1,E_2,E_3$:

    \begin{itemize}
        \item Edges in $E_2$ will be assigned labels in $L_2 = \{1,4,7,\ldots,1 + 3(n-k-1)\}$ - recall that there are exactly $n-k$ edges in $E_2$.

        \item Edges in $E_3$ will be assigned labels in $L_3 = \{m - \frac{k(k-1)}{2} + 1, m - \frac{k(k-1)}{2}+2,\ldots,m\}$, meaning the $\frac{k(k-1)}{2}$ largest labels. Notice that since $m - \frac{k(k-1)}{2} \geq 3(n-k)-2$, the smallest label assigned to $E_3$ is strictly higher than the largest label assigned to $E_2$.

        \item Edges in $E_1$ will be assigned the remaining labels in $L_1 = \{1,2,\ldots,m\} \setminus (L_2 \cup L_3)$.
    \end{itemize}

    The labelling of $G$ consists of the following steps:

    \begin{itemize}
        \item[Step $1$:] We arbitrarily label edges in $E_1$ with labels in $L_1$.

        \item[Step $2$:] We sort the vertices of $V \setminus K$ in increasing order of their value of $\sigma'$: $\sigma'(v_1) \leq \sigma'(v_2) \leq \ldots \leq \sigma'(v_{n-k})$. We then label edges of $E_2$ following this order - recall that there is exactly one incident edge in $E_2$ to each $v_i$ -, picking labels in $L_2$ in increasing order.

        \item[Step $3$:] We sort the vertices of $K$ by increasing order of their value of $\sigma'$: $\sigma'(u_1) \leq \sigma'(u_2) \leq \ldots \leq \sigma'(u_k)$. We then label edges in $E_3$ by visiting them in lexicographic order, and picking labels in $L_3$ in increasing order. 
    \end{itemize}

    We state some properties that directly result from the description of Steps $2$ and $3$, and the definition of $L_2$:

    \begin{prop}\label{ecart3}
    For any two vertices $v_i,v_j \in V \setminus K$, with $i \neq j$, $|\sigma(v_i) - \sigma(v_j)| \geq 3$.
    \end{prop}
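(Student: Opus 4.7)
The plan is to observe that Property \ref{ecart3} is essentially built into the construction: Step 3 only touches edges of $E_3$, both of whose endpoints lie in $K$, so the final sum $\sigma(v_i)$ for any $v_i \in V \setminus K$ is already determined at the end of Step 2. It therefore suffices to analyze what happens to vertices of $V \setminus K$ during Steps 1 and 2.

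Let me denote by $\sigma_1'(v)$ the partial sum at a vertex $v \in V \setminus K$ after Step 1, i.e., the sum of labels of the $E_1$-edges incident to $v$. Since every $v_i \in V \setminus K$ has, by construction, exactly one incident edge in $E_2$ and no incident edge in $E_3$, we have $\sigma(v_i) = \sigma_1'(v_i) + \ell_i$, where $\ell_i \in L_2$ is the label assigned in Step 2 to the unique $E_2$-edge incident to $v_i$. After reindexing so that $\sigma_1'(v_1) \leq \sigma_1'(v_2) \leq \ldots \leq \sigma_1'(v_{n-k})$ (which is precisely the ordering used to guide Step 2), the rule of Step 2 assigns the labels of $L_2 = \{1, 4, 7, \ldots, 1+3(n-k-1)\}$ in increasing order, so $\ell_i = 1 + 3(i-1)$.

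For any $i < j$ one then computes
\begin{equation*}
\sigma(v_j) - \sigma(v_i) = \bigl(\sigma_1'(v_j) - \sigma_1'(v_i)\bigr) + 3(j-i),
\end{equation*}
and both terms on the right are non-negative, the first by the chosen ordering and the second since $j > i$. In particular $\sigma(v_j) - \sigma(v_i) \geq 3$, which gives $|\sigma(v_i) - \sigma(v_j)| \geq 3$ for distinct indices and establishes the property.

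The only subtle point to flag is the verification that Step 3 really does not perturb $\sigma(v_i)$ for $v_i \in V \setminus K$, which is immediate because every edge labelled in Step 3 lies in $E_3 \subseteq K \times K$. Beyond that, the argument is a direct consequence of the arithmetic-progression spacing chosen for $L_2$ combined with the sort-and-assign rule of Step 2, so there is no real obstacle.
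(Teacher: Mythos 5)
Your argument is correct and is exactly the justification the paper intends (the paper merely asserts that the property ``directly results'' from Steps 2 and 3 and the definition of $L_2$, without writing it out): the spacing of $3$ between consecutive labels in $L_2$, combined with sorting the $v_i$ by their partial sums after Step 1 and the fact that each $v_i$ has exactly one incident $E_2$-edge and no incident $E_3$-edge, gives $\sigma(v_j)-\sigma(v_i)\geq 3(j-i)$ for $i<j$. No gap; your note that Step 3 cannot perturb $\sigma(v_i)$ is the right point to flag.
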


    \begin{prop}\label{ecartclique}
    For any vertex $u_i \in K$, with $i \leq k - 1$, $\sigma(u_{i+1}) - \sigma(u_i) \geq k - 2$.
    \end{prop}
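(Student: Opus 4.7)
The plan is to write $\sigma(u_{i+1}) - \sigma(u_i) = (\sigma'(u_{i+1}) - \sigma'(u_i)) + (S_{i+1} - S_i)$, where $S_j$ denotes the sum of the labels assigned during Step~3 to the edges of $E_3$ incident to $u_j$. Since Step~3 starts by sorting the vertices of $K$ so that $\sigma'(u_1) \leq \cdots \leq \sigma'(u_k)$, the first bracket is non-negative, so it suffices to prove $S_{i+1} - S_i \geq k-2$.

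To analyse $S_{i+1} - S_i$, I would exploit the fact that the edges of $E_3$ are labelled in lexicographic order with \emph{consecutive} labels from $L_3$, so label differences equal lex-position differences. I would pair up the edges of $E_3$ incident to $u_i$ with those incident to $u_{i+1}$ in the following way: for each $a < i$, pair $u_a u_i$ with $u_a u_{i+1}$; for each $b > i+1$, pair $u_i u_b$ with $u_{i+1} u_b$; and observe that the edge $u_i u_{i+1}$ appears in both $S_i$ and $S_{i+1}$ and hence cancels.

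For the first family, $u_a u_i$ and $u_a u_{i+1}$ are lexicographically consecutive, so the label of $u_a u_{i+1}$ exceeds that of $u_a u_i$ by exactly $1$, contributing $i-1$ in total. For the second family, between $u_i u_b$ and $u_{i+1} u_b$ in lex order lie precisely the $k - i - 2$ edges $u_i u_{b+1}, \ldots, u_i u_k, u_{i+1} u_{i+2}, \ldots, u_{i+1} u_{b-1}$, so each such pair contributes a label gap of $k - i - 1$; summing over the $k - i - 1$ choices of $b$ gives $(k-i-1)^2$. Therefore $S_{i+1} - S_i = (i-1) + (k-i-1)^2$, and setting $j = k-i-1$ rewrites this as $(k-2) + j(j-1) \geq k-2$, since $j(j-1) \geq 0$ for every integer $j \geq 0$.

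I do not expect any real obstacle: the heart of the argument is converting lex positions into label differences, after which the computation is routine bookkeeping. The one small trap to avoid is confusing the two gap sizes: $u_a u_i$ and $u_a u_{i+1}$ are lexicographically adjacent, whereas $u_i u_b$ and $u_{i+1} u_b$ are separated by $k - i - 2$ intermediate edges.
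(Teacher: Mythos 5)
Your proof is correct and is essentially the argument the paper leaves implicit (the paper merely asserts that Property~\ref{ecartclique} ``directly results'' from Step~3): splitting $\sigma(u_{i+1})-\sigma(u_i)$ into the sorted partial sums plus the $E_3$ contribution, and converting lexicographic positions of consecutive labels into the exact difference $(i-1)+(k-i-1)^2 \geq k-2$. The bookkeeping checks out (e.g., for $k=4$ it gives gaps $4,2,2$, matching a direct computation), so there is nothing to add.
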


    In all the following, we will call \emph{conflict} an equality $\sigma(x) = \sigma(y)$, for two distinct vertices $x$ and $y$.

    Let us suppose for now that $k \geq 5$; we will explain later on how to deal with the cases where $k = 3$ and $ k = 4$. Note that the two post-processing steps described below take place after Step $3$ of the labelling has been completed. All edges are now labelled, and we want to check if the labelling is antimagic, and resolve the conflicts if there are any.

    In the following, we will denote by $\stild(u)$ the value of $\sigma(u)$ before starting this Post-processing $1$. To facilitate the understanding of the proof, we will still use $\sigma(u)$ to denote at any point the sum of labelled edges incident to $u$.

    \begin{itemize}

        \item Post-processing $1$: 

         The only hypothetical conflicts can happen between some $u_i \in K$ and some $v_j \in V \setminus K$, according to the properties we just showed. Let us consider the $u_i$ in increasing order of $i$, for $1 \leq i \leq k - 2$. If, when we reach some vertex $u_i$, there exists another vertex $v_j \in V \setminus K$ such that $\sigma(u_i) = \sigma(v_j)$, we switch the label of $u_iu_k$ and the label of $u_{i+1}u_{i+2}$. Note that these are well defined since we supposed $1 \leq i \leq k - 2$.

    Since we labelled the edges of $K$ by sorting them in lexicographic order, these two edges have consecutive labels. By switching the two labels, we are then increasing the value of $\sigma(u_i)$ and $\sigma(u_k)$ by $1$, and decreasing $\sigma(u_{i+1})$ and $\sigma(u_{i+2})$ by $1$. Thanks to Property \ref{ecart3}, we know that by increasing $\sigma(u_i)$ by $1$ (without changing any values of $\sigma(v)$, for any $v \in V \setminus K$), we are guaranteeing that $\sigma(u_i)$ cannot be equal to any other value of $\sigma(v)$, for $v \in V \setminus K$. We will now explain why $u_i$ cannot be in conflict with another $u_j \in K$.

    Note that the only other changes that could happen due to the switching of some labels are the values of $\sigma(u_{i+1}), \sigma(u_{i+2})$ and $\sigma(u_k)$; however, since the $u_i$ are treated in increasing order, we will deal with those vertices later on, solving any potential conflict that could happen between those vertices and other vertices.

    Once all the $u_i$ have been treated, with $1 \leq i \leq k - 2$, we have guaranteed that, for any $1 \leq i \leq k - 2, 1 \leq j \leq n - k$, $\sigma(u_i) \neq \sigma(v_j)$. Moreover, for $i \in [1;k-2]$, the value of $\stild(u_i)$ might have been modified:

    \begin{itemize}
        \item by $+1$ when we reached $u_i$
        \item by $-1$ when we reached $u_{i-1}$ (if it exists)
        \item by $-1$ when we reached $u_{i-2}$ (if it exists)
    \end{itemize}

    Note that reaching another vertex (say $u_{i-1}$ for instance) does not automatically mean we modified $\stild(u_i)$ by $-1$; this only happens if there was a conflict involving $u_{i-1}$ when we reached it.

    Moreover, if the value of $\stild(u_{i+1})$ was decreased by $2$, this means the value of $\stild(u_{i})$ was also decreased by $1$ (when we reached $u_{i-1}$). In particular, since $k \geq 5$, this means that we still have $\sigma(u_{i+1}) - \sigma(u_i) \geq (k-2) - 2 > 0$, hence $\sigma(u_{i+1}) > \sigma(u_i)$ after treating all the vertices $u_p, 1 \leq p \leq k - 2$, thanks to Property \ref{ecartclique}.

    \item Post-processing $2$:

    We have to deal with conflicts involving $u_{k-1}$ and $u_k$. If there are no conflicts, then the labelling is antimagic and we are done. Otherwise, we will replace the label of $u_{k-1}u_k$ by a label picked among $\{m+1,m+2,m+3\}$. Let us denote $\lambda_1 = \sigma(u_{k-1})$ and $\mu_1 = \sigma(u_k)$ after labelling $u_{k-1}u_k$ with $m+1$. Note that, for any $1 \leq i \leq k - 2$, we still have $\sigma(u_i) < \lambda_1 < \mu_1$ (note that these inequalities stay valid if we pick $m+2$ or $m+3$ instead of $m+1$).

    Due to Property \ref{ecart3}, there exists at most one vertex $v_j \in V \setminus K$ such that $\sigma(v_j) \in \{\lambda_1,\lambda_1 + 1, \lambda_1 + 2\}$, which correspond to the different possible values of $\sigma(u_{k-1})$ depending on the value of $f(u_{k-1}u_k)$. Similarly, there exists at most one vertex $v_{j'} \in V \setminus K$ such that $\sigma(v_{j'}) \in \{\mu_1,\mu_1+1, \mu_1+2\}$. Overall, this means that at least one label among $\{m+1,m+2,m+3\}$ will guarantee that there are no conflicts involving $u_{k-1}$ or $u_k$, in turn guaranteeing that the labelling described is $3$-antimagic (meaning $G$ is $3$-antimagic).

    \end{itemize}

    We complete the proof with the cases $k = 3$ and $k = 4$:

    \begin{claim}
        If $k = 3$, $G$ is $3$-antimagic.
    \end{claim}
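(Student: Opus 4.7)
The plan is to reuse Steps $1$ and $2$ of the main algorithm verbatim, so that after their execution the final values $\sigma(v_j)$ for $v_j\in V\setminus K$ are already pairwise at distance at least $3$ (Property \ref{ecart3}), and the partial sums $d_i := \sigma'(u_i)$ for $i=1,2,3$ are fixed. Since $|K|=3$ leaves only three edges in $E_3$, the post-processings as stated (which implicitly need $k \ge 5$) do not apply. The natural substitute is to exploit the $3$-antimagic slack in full by allowing the three edges $u_1u_2, u_1u_3, u_2u_3$ to receive any three distinct labels from the enlarged pool $\Lambda = \{m-2, m-1, m, m+1, m+2, m+3\}$. This yields $\binom{6}{3}\cdot 3! = 120$ candidate completions, each an injection into $\{1,\ldots,m+3\}$, and the claim reduces to showing that at least one of these completions is antimagic.

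For each candidate, writing $L = \{A,B,C\} \subset \Lambda$ for the chosen subset and $\ell_i \in L$ for the label placed on the edge \emph{opposite} $u_i$, one has $\sigma(u_i) = d_i + (A+B+C) - \ell_i$, so each $\sigma(u_i)$ takes only three possible values, all lying in an integer interval of length at most $8$. Property \ref{ecart3} then implies that at most three values of $\sigma(v_j)$ lie in this interval, and for any fixed $L$ at most two of the three choices of $\ell_i$ are forbidden for a given $u_i$ (often only one). I would then apply Hall's marriage theorem to the bipartite graph that pairs each $u_i$ with its allowed labels in $L$ in order to extract a permutation avoiding all $u_i$--$v_j$ conflicts, and, if necessary, swap two labels within $L$ to additionally separate the three intra-$K$ sums $\sigma(u_1), \sigma(u_2), \sigma(u_3)$.

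The principal obstacle is the pathological case where Hall's condition fails for every subset $L \subset \Lambda$ simultaneously: this would mean that for each $L$ the three $u_i$'s share a single common forbidden label $\ell^*$, forcing the entire triple $(d_1+S-\ell^*, d_2+S-\ell^*, d_3+S-\ell^*)$ to sit in the $3$-spaced set of $\sigma(v_j)$-values. The plan for defeating this case is to vary $L$: as $L$ ranges over the $20$ three-subsets of $\Lambda$, the sum $S = A+B+C$ takes many distinct values (from $3m-3$ up to $3m+6$), so the coincidence conditions on $d_1, d_2, d_3$ relative to the $3$-spaced set $\{\sigma(v_j)\}$ cannot hold for all $S$ at once. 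Formalising this cleanly will require a small case analysis on the residues $d_i \bmod 3$, together with a direct verification that for each residue pattern at least one subset $L$ escapes the obstruction; this bookkeeping is the crux of the proof.
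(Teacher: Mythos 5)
Your proposal is not a complete proof: you explicitly defer the decisive step (``this bookkeeping is the crux of the proof''), namely the verification that some choice of three labels from $\{m-2,\ldots,m+3\}$ simultaneously avoids all $u_i$--$v_j$ conflicts and keeps the three sums $\sigma(u_1),\sigma(u_2),\sigma(u_3)$ pairwise distinct. The sketched machinery also has soft spots: Hall's condition can genuinely fail for a fixed $L$ (e.g.\ all three $u_i$ reduced to the same single allowed label), your proposed escape by varying $L$ over all $20$ subsets is only asserted, and the final ``swap two labels within $L$ to separate the intra-$K$ sums'' can reintroduce exactly the $u_i$--$v_j$ conflicts you just eliminated. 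As written, the argument does not close.

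The paper's proof is far more economical and sequential, and you would do well to compare. After Step~3 the clique edges carry $m-2$, $m-1$, $m$ and one has $\sigma(u_1)<\sigma(u_2)<\sigma(u_3)$. First, if $\sigma(u_1)$ collides with some $\sigma(v)$, replace $f(u_1u_3)$ by $m$ and $f(u_2u_3)$ by $m+1$: this shifts $\sigma(u_1)$ up by exactly $1$, which by Property~\ref{ecart3} (the $\sigma(v_j)$ are $3$-spaced) removes the collision without creating a new one, and it preserves the ordering inside $K$. Second, choose $f(u_2u_3)\in\{m+1,m+2,m+3\}$: the three candidate values of $\sigma(u_{2})$ form a window of three consecutive integers, hence meet at most one $\sigma(v_j)$, and likewise for $\sigma(u_3)$; so at most two of the three choices are excluded and one remains, while $\sigma(u_2)-\sigma(u_3)$ is unchanged. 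No exhaustive search over $120$ completions, no Hall argument, and no residue analysis is needed — the $3$-spacing of the $\sigma(v_j)$ does all the work once the fixes are applied one vertex (or pair of vertices) at a time.
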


    \begin{proof}
        Once Steps $1,2,$ and $3$ are done, we obtain: $f(u_1u_2) = m-2, f(u_1u_3) = m - 1$ and $f(u_2u_3) = m$.

        If there exists a vertex $v \in V \setminus K$ such that $\sigma(v) = \sigma(u_1)$, we replace $f(u_1u_3)$ by $m$ and $f(u_2u_3)$ by $m+1$. We still have $\sigma(u_1) < \sigma(u_2) < \sigma(u_3)$, and since we increased $\sigma(u_1)$ by $1$, we have guaranteed there are no conflicts involving $u_1$ thanks to Property \ref{ecart3}.

        Finally, similarly to the proof with $k \geq 5$, we pick a label for $f(u_2u_3)$ among $\{m+1,m+2,m+3\}$ in order to guarantee that there are no conflicts involving $u_2$ or $u_3$.
    \end{proof}

    \begin{claim}
        If $k = 4$, $G$ is $3$-antimagic.
    \end{claim}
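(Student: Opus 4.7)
The plan is to adapt the post-processing strategy of the $k \geq 5$ case, compensating for the fact that Property \ref{ecartclique} now only guarantees $\stild(u_{i+1}) - \stild(u_i) \geq 2$, which is too small to absorb the single-swap perturbations used previously. After Steps $1$--$3$ of the main labelling, the six clique edges $u_1u_2, u_1u_3, u_1u_4, u_2u_3, u_2u_4, u_3u_4$ carry labels $m-5, m-4, m-3, m-2, m-1, m$ respectively, and the only possible conflicts are between some $u_i$ and some $v_j$ (apart from $u$--$u$ collisions that our own perturbations might introduce).

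I will resolve conflicts in three stages. In stage (a), if $\sigma(u_1) = \sigma(v_j)$ for some $v_j \in V\setminus K$, swap $f(u_1u_4)$ and $f(u_2u_3)$; this is precisely the swap used for $i=1$ in the $k\geq 5$ proof, and it acts on $(\sigma(u_1), \sigma(u_2), \sigma(u_3), \sigma(u_4))$ by $(+1, -1, -1, +1)$. In stage (b), if $\sigma(u_2)$ now equals some $\sigma(v_j)$ \emph{or} equals $\sigma(u_1)$, swap $f(u_1u_2)$ and $f(u_1u_3)$; this acts by $(0, +1, -1, 0)$ and so leaves $\sigma(u_1)$ alone. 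In stage (c), if the resulting labelling still contains any conflict, apply Post-processing~$2$ of the $k \geq 5$ case to $\{u_3, u_4\}$: replace $f(u_3u_4)$ by an element of $\{m+1, m+2, m+3\}$ chosen to avoid all remaining conflicts. Property \ref{ecart3} forces at most one label in this set to cause a $v$-conflict at $u_3$ and at most one at $u_4$, while any such label immediately restores a gap $\geq 1$ between $\sigma(u_2)$ and $\sigma(u_3)$, so a good choice always exists.

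The delicate verification, which is the main obstacle, is that stage (b) does not introduce a fresh $v$-conflict at $u_2$. The most subtle sub-case is when stage (b) is triggered by $\sigma(u_2) = \sigma(u_1)$ rather than by a $v$-conflict: after stage (a) this equality forces $\stild(u_2) = \stild(u_1) + 2$, and after the stage (b) swap we have $\sigma(u_2) = \stild(u_2)$. A hypothetical $v_j$ with $\sigma(v_j) = \stild(u_2)$ would lie within $2$ of the $v_{j'}$ that triggered stage (a) (with $\sigma(v_{j'}) = \stild(u_1)$), contradicting Property \ref{ecart3}. The same one-line pattern -- applying Property \ref{ecart3} to a triggering $\sigma(v_j)$ together with a hypothetical new collision -- disposes of the four $(a,b) \in \{0,1\}^2$ sub-cases and, combined with the $\stild$-gaps of at least $2$, yields the final strict ordering $\sigma(u_1) < \sigma(u_2) < \sigma(u_3) < \sigma(u_4)$. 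Since every label used lies in $\{1,\ldots,m+3\}$, the labelling is $3$-antimagic.
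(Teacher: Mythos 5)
Your proposal is correct and follows essentially the same route as the paper: the same conflict-triggered swaps of consecutive clique labels, followed by relabelling $u_3u_4$ with a label from $\{m+1,m+2,m+3\}$ chosen via Property \ref{ecart3}, and a hand-check of the ordering $\sigma(u_1)<\sigma(u_2)<\sigma(u_3)<\sigma(u_4)$. The only differences are minor: you swap $f(u_1u_2)\leftrightarrow f(u_1u_3)$ instead of $f(u_2u_4)\leftrightarrow f(u_3u_4)$ (same net effect $(0,+1,-1,0)$ on the vertex sums), and you add $\sigma(u_2)=\sigma(u_1)$ as a trigger for the second swap, a case the paper instead rules out by noting that the lexicographic labelling actually yields $\stild(u_2)-\stild(u_1)\geq 4$ rather than the bound $k-2=2$ of Property \ref{ecartclique}.
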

    \begin{proof}
        We apply the same algorithm as in the case $k \geq 5$, however Property \ref{ecartclique} does not allow us to automatically claim that $\sigma(u_i) \neq \sigma(u_j)$ for any two vertices $u_i,u_j \in K$ since $k - 2 = 2$.

        We have $\stild(u_1) \leq \stild(u_2) - 4$. We then necessarily still have $\sigma(u_1) < \sigma(u_2)$ after the two post-processing steps.

        We have $\stild(u_2) \leq \stild(u_3) - 2$, due to Property \ref{ecartclique}. We also have $\sigma(u_2) \leq \stild(u_2) + 1$ (since $\sigma(u_2)$ can only be modified during Post-processing $1$), and since $f(u_3u_4) \in \{m+1,m+2,m+3\}$, $\sigma(u_3) \geq \stild(u_3)$. Indeed, if $\stild(u_3)$ has been decreased by $1$ when we reached $u_1$ and also when we reached $u_2$, then before starting Post-processing $2$, we have $f(u_3u_4) = m - 1$. Since $f(u_3u_4)$ is then modified to a label in $\{m+1,m+2,m+3\}$ during Post-processing $2$, we obtain $\sigma(u_3) \geq \stild(u_3)$. Overall we obtain $\sigma(u_2) < \sigma(u_3)$.

        Finally, before starting Post-processing $2$ and modifying the label of $u_3u_4$, we necessarily have $\sigma(u_3) < \sigma(u_4)$ at this point. Indeed, $\sigma(u_4) \geq \stild(u_4)$ since every switching of labels increases the value of $\sigma(u_4)$ by $1$, and $\sigma(u_3) \leq \stild(u_3)$ since there were only two possible switchings impacting $\sigma(u_3)$, both decreasing its value by $1$.

        We then have $\sigma(u_3) < \sigma(u_4)$, and the labelling is $3$-antimagic.
    \end{proof}

    We then have two specific proofs of $3$-antimagicness for $k = 3$ and $k = 4$, and a proof for $k \geq 5$, overall proving the theorem. 
\end{proof}

\section{Conclusion and future work}

In this paper, we study the antimagicness of graphs with a dominating clique, improving on the work of \cite{barrus2010}. We also introduce the definition of $C$-antimagicness, which can intuitively be seen as a form of weak antimagicness. This leads to the following conjecture:

\begin{conj}
    \label{conjpascentrale}
    Every connected graph (except $K_2$) is $C$-antimagic, for some constant $C \geq 0$.
\end{conj}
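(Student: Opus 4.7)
The plan is to split connected graphs into a dense regime and a sparse regime, and to use the extra $C$ labels primarily to handle the sparse case. In the dense regime, where the minimum degree is sufficiently large (say $\delta(G) \geq c\log n$ for some absolute constant $c$), I would use a probabilistic approach: label the edges by a uniformly random bijection $f : E \rightarrow \{1,\dots,m\}$ and bound, for each pair of vertices $u,v$, the probability that $\sigma(u) = \sigma(v)$. By a local central limit type estimate, or a Littlewood--Offord argument applied to the symmetric difference of the incident edge sets, this probability is at most roughly $O(1/\sqrt{|N(u)\triangle N(v)|})$, which is small for dense graphs. A union bound over the $\binom{n}{2}$ pairs, sharpened if needed by the Lov\'asz Local Lemma, then shows that a random labelling is already antimagic with positive probability, so $C = 0$ suffices in this regime. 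This both absorbs and generalizes the results of \cite{alon2004}.

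In the sparse regime, and especially when $G$ has vertices of very low degree, the extra $C$ labels become useful. The plan is to peel low-degree vertices: first choose a spanning connected subgraph $H$ of $G$ (for instance a spanning tree), root $H$ at a well-chosen vertex, label the edges of $E \setminus E(H)$ arbitrarily, and then label the edges of $H$ bottom-up so that at each step the vertex being finalized has its sum determined. When a conflict with a previously finalized vertex is about to appear, I would resolve it by replacing one label by an element of $\{m+1,\dots,m+C\}$, then re-balance by a local exchange of two already-placed labels, in the same spirit as the swap arguments used in the proofs of Theorems \ref{dominanted} and \ref{injectioned}. The key combinatorial lemma to establish would be that each vertex needs at most $O(1)$ such exchanges, so the total number of extra labels consumed is bounded by an absolute constant.

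The hardest substructure is undoubtedly the tree case, and within it the very sparse trees such as long paths and caterpillars, where every vertex sum involves at most $\Delta(G)$ labels and the degrees of freedom per vertex are minimal. Here I would treat trees separately, by a direct constructive labelling that places the largest labels along a long path through the tree and spreads the small labels across the leaves, reserving the extra $C$ budget for swapping a handful of leaf-edges to break the last collisions. A reasonable intermediate target would be to first prove Conjecture \ref{conjpascentrale} for trees, and then glue this with the dense-regime argument via a decomposition of $G$ into a ``core'' of high-degree vertices plus pendant trees.

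The main obstacle is that Conjecture \ref{conjpascentrale} is only a mild weakening of Conjecture \ref{conjcentrale}: removing bijectivity gives some slack, but the hard combinatorial core of avoiding vertex-sum collisions remains. Even the existence of a single absolute constant $C$ that works uniformly over all connected graphs, as opposed to a constant depending on $\Delta(G)$ or on the graph class, is highly non-trivial, and no such uniform bound is currently known even for trees of bounded degree. I therefore expect the real difficulty to be twofold: showing that the budget can be kept constant in $n$, and designing a labelling scheme robust to the worst-case sparse topologies. A sensible first milestone would be to establish the weaker statement that every connected graph other than $K_2$ is $C$-antimagic with $C = O(\log n)$, or with $C$ depending only on $\Delta(G)$, before attempting the constant case.
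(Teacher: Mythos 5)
You are attempting to prove what the paper states as Conjecture \ref{conjpascentrale} and deliberately leaves open: the authors give no proof, and the case $C=0$ is Conjecture \ref{conjcentrale} (Hartsfield--Ringel) itself. Your text is, as you yourself concede in the final paragraph, a research programme rather than a proof, so the verdict is a genuine gap --- essentially the whole argument is missing --- and two of your concrete steps would fail as stated. (Note also that if $C$ were allowed to depend on $G$ the statement would be trivial: label the edges with distinct powers of two, and distinct incident edge sets give distinct sums; so only the uniform-constant reading is at stake, which you correctly identify.) In the dense regime, your union bound does not close: under a uniformly random bijection the point probability of $\sigma(u)=\sigma(v)$ is at best of order $1/(m\sqrt{d})$ with $d=|N(u)\,\triangle\,N(v)|$, and your stated bound $O(1/\sqrt{d})$ is far weaker; summing either bound over $\binom{n}{2}$ pairs requires polynomial, not logarithmic, minimum degree. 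This is precisely why the proof in \cite{alon2004} that graphs with $\delta(G)\geq c\log n$ are antimagic is delicate, and in any case that regime is already settled by citing \cite{alon2004} rather than re-deriving it. The Local Lemma does not rescue the computation, since all the collision events are defined over a single random permutation of labels and are globally dependent.

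In the sparse regime, the ``key combinatorial lemma'' you defer --- that the total number of labels drawn from $\{m+1,\ldots,m+C\}$ is bounded by an absolute constant uniformly over all connected graphs --- is exactly the open content of the conjecture, and your proposed justification does not deliver it: ``each vertex needs at most $O(1)$ exchanges'' yields a budget of $O(n)$ replacement labels, not $O(1)$, because injectivity forces every replacement drawn from the surplus range to consume a fresh label permanently. It is instructive to compare with how Theorem \ref{injectioned} in the paper achieves $C=3$: the construction manufactures gaps of at least $3$ between the sums of vertices outside the clique (labels $1,4,7,\ldots$ on $E_2$, Property \ref{ecart3}) and gaps of at least $k-2$ inside the clique (Property \ref{ecartclique}), so that after the first post-processing all residual conflicts are funneled into relabelling the single edge $u_{k-1}u_k$, which one choice among $\{m+1,m+2,m+3\}$ repairs at once. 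No analogue of this funneling is known for general trees or sparse graphs, where no single edge dominates the conflict structure. Your closing suggestions --- first proving $C=O(\log n)$, or $C$ depending only on $\Delta(G)$, or the tree case in isolation --- are reasonable milestones, but they are restatements of the difficulty, not steps of a proof.
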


Notice that $C = 0$ is Conjecture \ref{conjcentrale}; however there might exist some `easy' constructions yielding a proof of $C$-antimagicness over some classes of graphs and allowing incremental progression towards a proof of Conjecture \ref{conjcentrale}.

Some antimagicness results over graphs with a dominating clique are also still open; for instance, graphs with a dominating $K_2$ such that $\frac{n}{2} \leq \Delta(G) \leq \frac{2n}{3}$ (when $\Delta(G) \geq \frac{2n}{3}$, Theorem \ref{yilmaed} applies) are not shown yet to be antimagic.

It might also be possible to adapt the proofs given in this article, to show antimagicness results over other similar classes of graphs. Recall that graphs $G$ such that $\Delta(G) \geq n-3$ are antimagic (\cite{alon2004}, \cite{yilma2013}). We think that similar ideas can be used to show that connected graphs $G$ such that $\Delta(G) = n - 4$ are antimagic, provided that their number of edges $m$ is large enough, \emph{i.e.} $m \geq cn$ for some constant $c > 1$. It is also interesting to study the case where $c$ is not a constant; for instance:

\begin{question}
    Are graphs $G$ such that $\Delta(G) \geq n - k$ for some $k$ and $m \geq f(k)\cdot n$, for some positive function $f$, antimagic?
\end{question}

\bibliographystyle{elsarticle-num}
\bibliography{biblio}

\end{document}